\newtheorem{definition}{Definition}
\newtheorem{lemma}{Lemma}
\newtheorem{theorem}{Theorem}
\theoremstyle{definition}
\newtheorem*{forms}{Forms}
\newtheorem{example}{Example}
\theoremstyle{remark}
\newtheorem*{checking}{Checking}
\title{Infinite derivative's series expansion of Indefinite and Definite Integral}
\author{Voloshyn Victor}
\email{vitea3v@rambler.ru}
\date{27 October 2012}
\begin{document}
\maketitle
\begin{abstract}
In this article it is proven the existence of integration of indefinite integrals as infinite derivative's series expansion.
$\int f(x) d x = \sum_{ i=0 }^{ \infty } (-1)^{i} f^{(i)} (x)  \frac{ x^{i+1} }{ (i+1)! } + C$.
This also opens a new way to integrate a definite integral.
\end{abstract}
\section{Introduction}

Numerical integration is used in science nowadays, e.g. ~\cite{ni1} ~\cite{ni2}, but these methods are not as flexible as symbolic math is. \\
Symbolic computation of indefinite integrals was developed by Liouville, Risch and Bronstein (~\cite{si1},~\cite{si2}) , but Rish algorithm can not solve a lot of cases.\\
Approximation in integration with Maclaurin and Taylor series(~\cite{sit1}) are not widely used of their polynomial behavior. \\
Formulas of derivative's series expansion of Indefinite and Definite Integral, which were concluded from this work, allow to find approximate solution in symbolic math with controlled accuracy. \\

This could be an useful tool for integration multi-parametric functions and finding integrals in Differential Equations.\\
\section{Definitions}
\begin{definition}
Let $ i \in   \{ 0, \mathbb{N} \}$. We say that \textbf{$i$-order derivative} of $f(x)$ function is defined as 
\begin{align} 
f^{\left ( i \right )} \equiv \frac{d ^ i f}{d x ^ i}  \label{eq:def1}
\end{align}
or recursive
\begin{align}
\begin{cases}
f^{\left ( 0  \right )} = f \\
f^{\left ( i  \right )} = \frac{d f^{\left ( i-1  \right )}}{d x } ,  i \in  \mathbb{N}
\end{cases}
\end{align}
\end{definition}
\begin{definition}
Let $ i \in   \{ 0, \mathbb{N} \}$ We say that \textbf{$i$-order integral} of $f(x)$ function is defined as 
\begin{equation}
F^{\left ( i \right )} \equiv \underbrace { \int \cdots \int}_{i} f (d x)^i \label{eq:def2}
\end{equation}
or recursive
\begin{equation}
\begin{cases}
F^{\left ( 0 \right )} = f \\
F^{\left ( i \right )}= \int F^{\left ( i-1 \right )} d x ,  i \in  \mathbb{N}
\end{cases}
\end{equation}\\
\end{definition}
\section{Lemma of Integration of complex function}
\begin{lemma}
Let $u, v$ are functions. If $ \forall i \in  \left \{ 0,\mathbb{N} \right \} : \exists u^{(i)}, \exists V^{(i)}$ : exists all $i$-order derivatives of \textbf{u} function and exists all $i$-order integrals of \textbf{v} function then 
\begin{equation}
\int u d v =  \sum_{i=0}^{\infty } \left ( -1 \right )^ i u^{\left ( i \right )} V^{\left ( i \right )} \label{eq:uvT}
\end{equation}
\end{lemma}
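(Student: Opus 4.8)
The plan is to obtain \eqref{eq:uvT} by iterating the ordinary integration-by-parts rule and then controlling the leftover tail integral. Writing $v=V^{(0)}$, the recursive definition of the iterated integral $V^{(i)}$ gives $dV^{(k)}=V^{(k-1)}\,dx$ for every $k\ge 1$. I would start from the one-step identity
\[
\int u\,dv = u\,V^{(0)} - \int V^{(0)}\,du = u\,V^{(0)} - \int u^{(1)}\,V^{(0)}\,dx ,
\]
and then use $V^{(0)}\,dx=dV^{(1)}$ to rewrite the remaining integral as $\int u^{(1)}\,dV^{(1)}$, which has exactly the same shape as the original one with $u\mapsto u^{(1)}$ and $V^{(0)}\mapsto V^{(1)}$.

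Iterating this reduction while tracking signs, I expect to prove by induction on $n$ the finite expansion
\[
\int u\,dv = \sum_{i=0}^{n-1}(-1)^{i}\,u^{(i)}\,V^{(i)} + (-1)^{n}\int u^{(n)}\,V^{(n-1)}\,dx .
\]
The inductive step is a single further integration by parts applied to the remainder $(-1)^{n}\int u^{(n)}\,dV^{(n)}$, and it is legitimate because the hypotheses guarantee that $u^{(n)}$ and $V^{(n)}$ exist for all $n$. Letting $n\to\infty$ then produces the series \eqref{eq:uvT} exactly when the remainder $R_n=(-1)^{n}\int u^{(n)}\,V^{(n-1)}\,dx$ tends to $0$.

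The real obstacle is precisely this convergence statement: the mere existence of all $u^{(i)}$ and $V^{(i)}$ does not by itself force $R_n\to 0$. I would therefore add a quantitative tail estimate, bounding $|R_n|$ through $\sup|u^{(n)}|$ and the size of the iterated integral $V^{(n-1)}$ on the interval under consideration; in the basic case $v=x$, which reproduces $\int f\,dx$, one has $V^{(k)}=x^{k+1}/(k+1)!$, and it is the factorial denominators that drive the remainder to zero. Alternatively, one may simply assume that the right-hand series of \eqref{eq:uvT} converges and identify its limit. This is the step that carries the genuine analytic content; the repeated integration by parts is otherwise routine.

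As an independent check of the algebra I would differentiate the proposed right-hand side term by term: since $(V^{(i)})'=V^{(i-1)}$, the derivative of $(-1)^{i}u^{(i)}V^{(i)}$ telescopes against that of its neighbours, and everything cancels except $u\,v'$, which is exactly the integrand of $\int u\,dv$. This confirms the identity at the formal level and localizes the analytic difficulty once more to the justification of termwise differentiation, that is, to uniform convergence of the differentiated series.
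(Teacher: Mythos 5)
Your proposal follows essentially the same route as the paper's own proof: the one-step identity, the rewriting $V^{(i)}\,dx = dV^{(i+1)}$, and the iterated integration by parts
\begin{equation*}
\int u\,dv \;=\; \sum_{i=0}^{n-1}(-1)^{i}\,u^{(i)}V^{(i)} \;+\; (-1)^{n}\int u^{(n)}\,dV^{(n)},
\end{equation*}
which is literally the paper's ``Form with partial expansion.'' The difference is what happens after that. The paper simply iterates indefinitely and writes down the infinite series, never mentioning the remainder term or the convergence of the series; you stop and observe that the hypotheses (mere existence of all $u^{(i)}$ and $V^{(i)}$) do not force the remainder $R_n$ to vanish, and that this is where the real analytic content lies.

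That caution is not pedantry: the gap you isolate is a genuine gap in the paper, and it cannot be closed as the lemma is stated. The paper's own Example 1 applies \eqref{eq:uvT} to $\int \frac{e^x}{x}\,dx$ and obtains $e^{x}\sum_{i=0}^{\infty} \frac{i!}{x^{i+1}} + C$, a series that diverges for every $x$, so no argument from these hypotheses alone can prove the asserted equality. Your two proposed repairs are exactly right: either add a hypothesis making the tail vanish (a bound showing $u^{(n)}V^{(n)}\to 0$ together with convergence of the series --- which does hold in the special case $v = x$, $V^{(i)} = \frac{x^{i+1}}{(i+1)!}$ of Theorem 1, provided $f^{(n)}$ grows sub-factorially, e.g.\ $f = e^x$), or assume outright that the right-hand series converges and the remainder tends to zero. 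Your closing consistency check by termwise differentiation mirrors the paper's own ``Checking'' section for Theorem 1 and is formally correct, with the same caveat you already note about justifying termwise operations. In short: your algebra coincides with the paper's proof, and the step you flag as missing is missing there too --- silently.
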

\begin{proof}
Knowing (~\cite{manual}) that
\[
d \int f\left ( x \right )d x = f\left ( x \right ) d x 
\]
and
\[
\int d f = \int d f \frac{d x}{d x} = \int \frac{d f}{d x} d x
\]
and using  ~\eqref{eq:def1} and ~\eqref{eq:def2},  it's easy to show that following equality of integral $\int v d u$ is
\begin{equation}
\int v d u = \int \frac{d u}{d x}  d \left ( \int v d x \right )
\Longleftrightarrow
\int V^{\left ( 0 \right )} d u^{\left ( 0 \right )} = \int u^{\left ( 1 \right )} d V^{\left ( 1 \right )} \label{eq:vu2uv}
\end{equation}
Furthermore, integral with $i$-order derivative and $i$-order integra is
\begin{equation}
\int V^{\left ( i \right )} d u^{\left ( i \right )} = \int \frac{d u^{\left ( i \right )}}{d x}  d \left ( \int V^{\left ( i \right )} d x \right ) =\int u^{\left ( i+1 \right )} d V^{\left ( i+1 \right )} \label{eq:vu2uvC}
\end{equation}
Using (~\cite{manual})
\begin{equation}
\int u d v = u v  - \int v d u 
\end{equation}
and substituting ~\eqref{eq:vu2uv} into it, we get
\begin{equation}
\int u^{\left ( 0 \right )} d V^{\left ( 0 \right )} = u^{\left ( 0 \right )} V^{\left ( 0 \right )}  - \int u^{\left ( 1 \right )} d V^{\left ( 1 \right )} \label{eq:uv}
\end{equation}
and generalized case, using ~\eqref{eq:vu2uvC}:
\begin{equation}
\int u^{\left ( i \right )} d V^{\left ( i \right )} = u^{\left ( i \right )} V^{\left ( i \right )}  - \int u^{\left ( i+1 \right )} d V^{\left ( i+1 \right )} \label{eq:uvC}
\end{equation}
Let's evaluate the integral $\int u d v$ with ~\eqref{eq:uv} and ~\eqref{eq:uvC} using mathematical induction:
\begin{align}
\int u d v = u^{\left ( 0 \right )} V^{\left ( 0 \right )}  - \int u^{\left ( 1 \right )} d V^{\left ( 1 \right )} = u^{\left ( 0 \right )} V^{\left ( 0 \right )}  - u^{\left ( 1 \right )} V^{\left ( 1 \right )}  + \int u^{\left ( 2 \right )} d V^{\left ( 2 \right )} = \ldots \nonumber \\
= u^{(0)}V^{(0)} - u^{(1)}V^{(1)} + u^{(2)}V^{(2)} - u^{(3)}V^{(3)} + u^{(4)}V^{(4)} - u^{(5)}V^{(5)} + \ldots
\end{align}
Thus,
\begin{equation}
\int u d v =  \sum_{i=0}^{\infty } \left ( -1 \right )^ i u^{\left ( i \right )} V^{\left ( i \right )}
\end{equation}
\end{proof}
\begin{forms}
The formula ~\eqref{eq:uvT} could be written in different terms. Standard form:
\begin{equation}
\int u d v =  \sum_{i=0}^{\infty }  ( -1)^ i  \frac {d ^ i u}{d x ^ i } \underbrace { \int \cdots \int}_{i} v (d x)^i
\end{equation}
Form with partial expantion:
\begin{equation}
\int u d v =  \sum_{i=0}^{n-1 }  ( -1 )^ i u^{ ( i )} V^{ ( i )} + ( -1 )^ n \int u^{ ( n )} d V^{ ( n )} 
\end{equation}\\
\end{forms}
\begin{example}
Let's find integral of $\int \frac{ e ^ x }{x} d x$ : \\
By the Lemma 1 with ~\eqref{eq:uvT}
\begin{align}
\int \frac{ e ^ x }{x} d x =\begin{vmatrix} v = e^{x} , & V^ {\left ( i \right )} = e^{x}, C_i = 0,C_{\infty}=C \\ u = \frac{1}{x} , & u^{\left ( i \right )} = \left ( -1 \right )^{ i } i! \frac{1}{x ^ {i+1}}  \end{vmatrix} = e^{x}\sum_{i=0}^{\infty } \frac{i!}{x^{i+1}} + C
\end{align}\\
\end{example}
\begin{example}
Let's find integral of $\int \frac{ e ^ x }{x^ n} d x$ : \\
By the Lemma 1 with ~\eqref{eq:uvT}
\begin{align}
\int \frac{ e ^ x }{x^n} d x =\nonumber \\
& \begin{vmatrix} v = e^{x} , & V^ {\left ( i \right )} = e^{x}, C_i = 0,C_{\infty}=C \\ u = \frac{1}{x^n} , & u^{\left ( i \right )} = \left ( -1 \right )^{ i } \frac{(n-1+i)!}{(n-1)!} \frac{1}{x ^ {i+n}}  \end{vmatrix} =  & \nonumber \\ 
& \frac{e^{x}}{(n-1)!}\sum_{i=0}^{\infty } \frac{(n-1+i)!}{x^{i+n}} + C &
\end{align}\\
\end{example}
\section{Formulas of Integration}
\begin{theorem}[Main]
Let $f(x)$ is function. If $ \forall i \in \{ 0, \mathbb{N} \} : \exists f^{(i)} (x)$ exists all $i$-order derivatives of this function, then 
 \begin{align}
\int f(x) d x = \sum_{ i=0 }^{ \infty }(-1)^{i} f^{(i)} (x)  \frac{ x^{i+1} }{ (i+1)! } + C \label{eq:fT}
\end{align}\\
\end{theorem}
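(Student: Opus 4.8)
The plan is to derive \eqref{eq:fT} as a direct specialization of Lemma 1. The key observation is that $\int f(x)\,dx$ can be written in the form $\int u\,dv$ by the standard device of integrating ``against $1$'': take $u = f$ and let $v$ be the function with $dv = dx$, namely $v = x$. Then $\int u\,dv = \int f\,dx$ is exactly the integral we want, the hypothesis $\exists f^{(i)}$ for all $i$ supplies the derivatives $u^{(i)} = f^{(i)}$ demanded by the Lemma, and the iterated integrals of $v = x$ manifestly all exist, so the hypotheses of Lemma 1 are satisfied.

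First I would compute the iterated integrals $V^{(i)}$ of $v = x$ from Definition 2. We have $V^{(0)} = x$ and $V^{(i)} = \int V^{(i-1)}\,dx$, so a one-line induction gives $V^{(i)} = \frac{x^{i+1}}{(i+1)!}$, since $\int \frac{x^{i+1}}{(i+1)!}\,dx = \frac{x^{i+2}}{(i+2)!}$. Substituting $u^{(i)} = f^{(i)}(x)$ together with this expression for $V^{(i)}$ into the series \eqref{eq:uvT} of Lemma 1 yields
\begin{equation}
\int f(x)\,dx = \sum_{i=0}^{\infty} (-1)^{i} f^{(i)}(x)\,\frac{x^{i+1}}{(i+1)!},
\end{equation}
and restoring the arbitrary constant of integration $C$ gives precisely \eqref{eq:fT}.

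The step I expect to be the genuine obstacle is not the substitution but the convergence of the resulting series, which Lemma 1 handles only formally through its telescoping induction. Using the partial-expansion form of the Lemma, the equality truncated after $n$ terms carries the remainder $(-1)^{n}\int u^{(n)}\,dV^{(n)} = (-1)^{n}\int f^{(n)}(x)\,\frac{x^{n}}{n!}\,dx$, and \eqref{eq:fT} holds exactly when this remainder tends to $0$ as $n \to \infty$. I would therefore isolate this point explicitly: the identity is valid whenever the remainder vanishes, and for a general $f$ this requires a growth control on the successive derivatives $f^{(n)}$ rather than merely their existence. Establishing that vanishing — or stating the hypotheses under which it is guaranteed — is where the real content lies, whereas the algebraic reduction to Lemma 1 is immediate.
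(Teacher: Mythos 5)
Your proof is correct and takes essentially the same route as the paper's own: the paper likewise specializes Lemma 1 by choosing $u = f$, $v = x$, noting $u^{(i)} = f^{(i)}$ and $V^{(i)} = \frac{x^{i+1}}{(i+1)!}$, and substituting into \eqref{eq:uvT} to obtain \eqref{eq:fT}. Your closing observation about the remainder $(-1)^{n}\int f^{(n)}(x)\,\frac{x^{n}}{n!}\,dx$ needing to vanish is a genuine issue the paper never addresses at all, so there you go beyond the paper rather than diverge from it.
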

\begin{proof}
We can easily find integral by Lemma 1 and ~\eqref{eq:uvT}
\begin{align}
\int f(x) d x =  F^ { (1)} (x) = \begin{vmatrix} v= x  & V^{(i)} = \frac{ x^{i+1} }{ (i+1)! } , C_i = 0,C_{\infty}=C \\  u = f & u^{(i)} = f^{(i)} \end{vmatrix} =\nonumber \\ 
\sum_{ i=0 }^{ \infty }\left ( -1 \right )^{i} f^{(i)}(x) \frac{ x^{i+1} }{ (i+1)! } + C
\end{align}
\end{proof}
\begin{forms}
The formula ~\eqref{eq:fT} could be written in different terms. Standard form:
 \begin{align}
\int f(x) d x = f(x) + \sum_{ i=1 }^{ \infty }\left ( -1 \right )^{i}\frac{d ^ i f(x)}{d x ^ i} \frac{ x^{i+1} }{ (i+1)! } + C
\end{align}\\
\end{forms}
\begin{checking}
Let's check, if $( \int f(x) d x )' = f(x)$ : \\
From ~\eqref{eq:fT} :
\begin{eqnarray}
\lefteqn{ \left ( \sum_{ i=0 }^{ \infty } \left ( -1 \right )^{i} f^{(i)} \frac{ x^{i+1} }{ (i+1)! } + C \right )' =}  \\
& & =  \left | (fg)' = f'g + fg' \right |  = \nonumber \\ 
& & =  \sum_{i=0}^{\infty } \left ( -1 \right )^{i} f^ {(i)} \frac{(i+1) x^{i} }{ (i+1)! } + \sum_{i=0}^{\infty } \left ( -1 \right )^{i} f^ {(i+1)} \frac{ x^{i+1} }{ (i+1)! } \nonumber \\
 & & = f(x) + \sum_{i=1}^{\infty } \left ( -1 \right )^{i} f^ {(i)} \frac{ x^{i} }{ i! } - \sum_{i=1}^{\infty } \left ( -1 \right )^{i} f^ {(i)} \frac{ x^{i} }{ i! } \nonumber \\
& & =  f(x) 
\end{eqnarray}\\
\end{checking}
\begin{example}
Let's find integral of $\int x d x$ : \\
By the Theorem 1 with ~\eqref{eq:fT}
\begin{align}
\int x d x =  \begin{vmatrix}f^{(0)}=x\\f^{(1)}=1\\f^{(i)}=0, i>1, i \in \mathbb{N}\end{vmatrix} = xx-\frac{x^2}{2}+C = \frac{x^2}{2}+C
\end{align}\\
\end{example}
\begin{example}
Let's find integral of $\int e^x d x$ : \\
By the Theorem 1 with ~\eqref{eq:fT}
\begin{align}
\int e^x d x =  \begin{vmatrix}f^{(i)}=e^x, i \in \mathbb{N}\end{vmatrix} = e^x\sum_{ i=0 }^{ \infty } ( -1 )^{i}\frac{ x^{i+1} }{ (i+1)! }+C \label{eq:ex_ex}
\end{align}
Using Taylor series (~\cite{manual}) 
\begin{align}
\sum_{ i=1 }^{ \infty } ( -1 )^{i+1}\frac{ x^{i} }{ i! } = e^{-x}(e^x-1)
\end{align}
integral ~\eqref{eq:ex_ex} could be rewritten in
\begin{align}
\int e^x d x =  e^x e^{-x}(e^x-1) + C = e^x + C'
\end{align}\\
\end{example}
\begin{theorem}
Let $f(x)$ is function. If $ \forall i \in \{ 0, \mathbb{N}\}  : \exists f^ {(i)}(x) $ exists all $i$-order derivatives of this function, then 
\begin{align}
\int_{a}^{b} f (x) dx = \sum_{ i=0 }^{ \infty } ( -1 )^{i} \frac { b^{i+1} f^{(i)} \left(b\right) - a^{i+1} f^{(i)} (a)}{ (i+1)! }
\end{align}\\
\end{theorem}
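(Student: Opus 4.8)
The plan is to derive the definite-integral formula directly from the Main Theorem (equation~\eqref{eq:fT}), which already expresses the indefinite integral $\int f(x)\,dx$ as the series $\sum_{i=0}^{\infty}(-1)^{i} f^{(i)}(x)\,\frac{x^{i+1}}{(i+1)!} + C$. Since the definite integral is obtained by the Fundamental Theorem of Calculus as the difference of an antiderivative evaluated at the two endpoints, I would simply take the antiderivative $F(x) = \sum_{i=0}^{\infty}(-1)^{i} f^{(i)}(x)\,\frac{x^{i+1}}{(i+1)!}$ supplied by the Main Theorem and compute $F(b) - F(a)$.

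First I would write $\int_{a}^{b} f(x)\,dx = F(b) - F(a)$, where $F$ is the antiderivative from~\eqref{eq:fT}; the arbitrary constant $C$ cancels in the subtraction, so it need not be tracked. Next I would substitute $x = b$ and $x = a$ into the series term-by-term, giving two series $\sum_{i=0}^{\infty}(-1)^{i} f^{(i)}(b)\,\frac{b^{i+1}}{(i+1)!}$ and $\sum_{i=0}^{\infty}(-1)^{i} f^{(i)}(a)\,\frac{a^{i+1}}{(i+1)!}$. Finally I would subtract the two series and combine them under a single summation index, collecting the $b$ and $a$ contributions for each fixed $i$ into the single fraction
\[
\int_{a}^{b} f(x)\,dx = \sum_{i=0}^{\infty}(-1)^{i}\,\frac{b^{i+1} f^{(i)}(b) - a^{i+1} f^{(i)}(a)}{(i+1)!},
\]
which is exactly the claimed identity.

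The only delicate point, and the step I expect to be the real obstacle, is the \emph{interchange of the limit (endpoint substitution) with the infinite summation}: term-by-term evaluation and subtraction of the two series is legitimate only if each series converges, so that their difference can be regrouped under one index without altering the value. The hypothesis guarantees that all derivatives $f^{(i)}$ exist, but convergence of $\sum_{i=0}^{\infty}(-1)^{i} f^{(i)}(x)\,\frac{x^{i+1}}{(i+1)!}$ to the antiderivative is exactly what the Main Theorem asserts, so I would invoke that theorem to license the evaluation at $x=b$ and $x=a$ separately. Granting this, the regrouping is a routine linearity step and the proof is immediate.
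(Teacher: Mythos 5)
Your proposal follows exactly the same route as the paper's own proof: invoke the Fundamental Theorem of Calculus, use the antiderivative series from the Main Theorem (equation~\eqref{eq:fT}) evaluated at $x=b$ and $x=a$, cancel the constant $C$, and combine the two series into a single sum. Your added remark about the convergence issue in the term-by-term evaluation is a point the paper itself passes over silently, but it does not change the argument's structure.
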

\begin{proof}
By First Fundamental Theorem of Calculus~\cite{manual} and by Teorem 1 with ~\eqref{eq:fT} we get 
\begin{eqnarray}
\lefteqn{ \int_{a}^{b} f (x) dx = }  \nonumber \\
& & = \begin{vmatrix} \int_{a}^{b} f (x) dx = F^{(1)}(b) - F^{(1)}(a) \end{vmatrix} \nonumber \\
& & = \sum_{ i=0 }^{ \infty }\left ( -1 \right )^{i} f^{(i)} (b) \frac{ b^{i+1} }{ (i+1)! } + C - \left (  \sum_{ i=0 }^{ \infty }\left ( -1 \right )^{i} f^{(i)} (a) \frac{ a^{i+1} }{ (i+1)! } + C \right )   \nonumber \\
& & = \sum_{ i=0 }^{ \infty } ( -1 )^{i} \frac{ b^{i+1} f^{(i)} (b) - a^{i+1} f^{(i)} (a)}{ (i+1)! }
\end{eqnarray}
\end{proof}

\end{document}